\newcommand{\field}[1]{\mathbb{#1}}
\newcommand{\C}{\field{C}}
\newtheorem{defi}{Definition}[section]
\newtheorem{ex}[defi]{Example}
\newtheorem{lem}[defi]{Lemma}
\newtheorem{theo}[defi]{Theorem}
\newtheorem{pr}[defi]{Proposition}
\font\tenmsy=msbm10
\def\Bbb#1{\hbox{\tenmsy#1}} 
\title[On the set of fixed points of a polynomial automorphism]{On the set of fixed points of a polynomial automorphism}
\author{Zbigniew Jelonek \& Tomasz Lenarcik}
\address[Z. Jelonek]{Instytut Matematyczny\\
Polska Akademia Nauk\\
\'Sniadeckich 8, 00-956 Warszawa, Poland}
\email{najelone@cyf-kr.edu.pl}
\address[T. Lenarcik]{Faculty of
Mathematics and Computer Science, Jagiellonian Univeristy\\
ul. prof. Stanis\l{}awa \L{}ojasiewicza 6, 30-348 Krak\'ow,
Poland\\} \email{Tomasz.Lenarcik@im.uj.edu.pl}
\keywords{affine variety,  group of automorphisms } \subjclass{14
R 10}
\thanks{The author was partially supported by the grant of NCN,  2014-2017. }
\date{\today}
\begin{document}

\maketitle

\begin{abstract}
Let $\Bbb K$ be an algebraically closed field of characteristic
zero. We say that a polynomial automorphism $f: \Bbb K^n\to \Bbb
K^n$ is special if the Jacobian of $f$ is equal to $1.$ We show
that every $(n-1)$-dimensional component $H$ of the set ${\rm
Fix}(f)$ of fixed points of a non-trivial special polynomial
automorphism $f: \Bbb K^n\to\Bbb K^n$ is uniruled. Moreover, we
show that if $f$ is non-special and $H$ is an $(n-1)$-dimensional
component of the set ${\rm Fix}(f)$, then $H$ is smooth,
irreducible and $H={\rm Fix}(f)$ and for $\Bbb K=\C$ the Euler
characteristic of $H$ is equal to $1.$
\end{abstract}

\bibliographystyle{alpha}
\maketitle

\section{Introduction}

Polynomial automorphism of affine space $\Bbb K^n$ have always
attracted a lot of attention, but the nature of these
automorphisms still not well-known.

Here we are interested in the set of fixed points of  such
automorphisms. Let us recall that if $f: \Bbb K^2\to\Bbb K^2$ is a
polynomial automorphism, then the set ${\rm Fix}(f)$ of fixed
points of $f$ is either finite, or it is a union of smooth,
disjoint curves which all are isomorphic to $\Bbb K.$ This result
was proved in \cite{jel1} and later it was partially reproved in
\cite{miya}. Moreover, by Kambayashi result, every automorphism of
$\Bbb K^2$ of finite order is linear in some system of
coordinates. We do not know whether Kambayashi result can be
extended to higher dimensions. However there is some evidence that
the set of fixed points of a polynomial automorphism of finite
order should be isomorphic to a linear subspace.

In higher dimensions the situation is more complicated. The set of
fixed points can have components of dimension $n-1$ and
additionally less dimensional components - an easy example is $f :
(x,y,z)\ni \Bbb K^4\to (x+zy, y+ zw, z, w)\in \Bbb K^4.$ Moreover,
an $(n-1)$-dimensional component of the set of fixed points of $f$
can be a singular variety- as in the  famous Nagata automorphism:
$$f(x,y,z)=(x+(x^2-yz)z, y+2(x^2-yz)x+(x^2-yz)^2z, z).$$
Here the set of fixed points  is the quadratic cone $\Lambda=\{
(x,y,z): x^2=yz\}$. We show however that such a strange behavior
is possible only for {\it special automorphisms, i.e., for
automorphisms with Jacobian equal to one}. In this paper we focus
on $(n-1)$-dimensional components of the set of fixed points of
polynomial automorphism of $\Bbb K^n.$ Our first result is:

\begin{theo}
Let $\Bbb K$ be an arbitrary algebraically closed field. Let
$f:\Bbb K^n\to \Bbb K^n$ be a non-special polynomial automorphism.
Let $H$ be a hypersurface that is contained in the set ${\rm
Fix}(f)$ of fixed points of $f.$ Then

1) $H$ is smooth and irreducible.

2) $H={\rm Fix}(f).$
\end{theo}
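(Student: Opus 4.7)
The plan is to show that each irreducible component of $H$ defines a semi-invariant of $f$ with eigenvalue $c:=\mathrm{Jac}(f)\in\Bbb K^{*}\setminus\{1\}$, and then to read off both conclusions from this identity. Let $H_{1},\ldots,H_{k}$ be the irreducible components of the hypersurface $H$, with irreducible defining polynomials $h_{1},\ldots,h_{k}\in\Bbb K[x_{1},\ldots,x_{n}]$, and set $g_{j}(x):=f_{j}(x)-x_{j}$. Since $H_{i}\subset\mathrm{Fix}(f)$, each $g_{j}$ vanishes on $V(h_{i})$, hence by irreducibility $h_{i}\mid g_{j}$; write $g_{j}=h_{i}\tilde g^{(i)}_{j}$. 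Differentiating $f_{j}=x_{j}+h_{i}\tilde g^{(i)}_{j}$ gives, for every $p\in H_{i}$, the rank-one formula
\[
df_{p}=I+\tilde g^{(i)}(p)\,\nabla h_{i}(p)^{T}.
\]
The identity $\det(I+uv^{T})=1+v^{T}u$ then yields $c=1+\nabla h_{i}(p)\cdot\tilde g^{(i)}(p)$ on $H_{i}$; since $c-1\neq 0$, this forces $\nabla h_{i}\neq 0$ everywhere on $H_{i}$, so each $H_{i}$ is smooth. Transposing the rank-one formula also gives $(df_{p})^{T}\nabla h_{i}(p)=c\,\nabla h_{i}(p)$.

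Because $f$ fixes $H_{i}$ pointwise, $h_{i}\circ f$ vanishes on $V(h_{i})$, and irreducibility of $h_{i}$ forces $h_{i}\circ f=v_{i}h_{i}$ for some polynomial $v_{i}\in\Bbb K[x_{1},\ldots,x_{n}]$. Applying the same argument to $f^{-1}$ produces $v'_{i}$ with $h_{i}\circ f^{-1}=v'_{i}h_{i}$, and composing,
\[
h_{i}=(h_{i}\circ f^{-1})\circ f=h_{i}\cdot v_{i}\cdot(v'_{i}\circ f),
\]
so $v_{i}\cdot(v'_{i}\circ f)=1$, and hence $v_{i}$ is a unit in $\Bbb K[x_{1},\ldots,x_{n}]$, i.e.\ a nonzero constant. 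Differentiating $h_{i}\circ f=v_{i}h_{i}$ at any $p\in H_{i}$ and comparing with the eigenvalue identity $(df_{p})^{T}\nabla h_{i}(p)=c\,\nabla h_{i}(p)$ pins down $v_{i}=c$.

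With the polynomial identity $h_{i}\circ f=c\,h_{i}$ in hand, any $p\in\mathrm{Fix}(f)$ satisfies $h_{i}(p)=h_{i}(f(p))=c\,h_{i}(p)$, so $(c-1)h_{i}(p)=0$ and hence $h_{i}(p)=0$. Thus $\mathrm{Fix}(f)\subset H_{i}$ for every $i$; combined with $H_{i}\subset H\subset\mathrm{Fix}(f)$ this forces $k=1$ and $H=H_{1}=\mathrm{Fix}(f)$, giving irreducibility and assertion 2), while smoothness was established above. The delicate step is the constancy of the multiplier $v_{i}$: it uses the invertibility of $f$ in an essential way, and without it one would only obtain $v_{i}\equiv c\pmod{h_{i}}$, which is not enough to control fixed points off $V(h_{i})$.
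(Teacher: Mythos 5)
Your proof is correct, and it rests on the same pivotal fact as the paper's: each component's reduced equation satisfies $h_i\circ f=c\,h_i$ with $c=\mathrm{Jac}(f)\neq 1$, after which $h_i(p)=c\,h_i(p)$ kills any fixed point off $V(h_i)$ and forces $\mathrm{Fix}(f)=V(h_i)=H$. Where you differ is in how that identity (and smoothness) is obtained. The paper works at one smooth point $a$ of a component: since $d_af$ fixes the hyperplane $T_aH$, its eigenvalues are $1$ with multiplicity $n-1$ together with $\det(d_af)=\lambda$, and the $\lambda$-eigenvectors of $(d_af)^{*}$ are exactly the forms cutting out $T_aH$ (its Lemma 2.1); smoothness is supplied by a separate lemma showing a singular point of $H$ would force $\mathrm{Jac}(f)=1$. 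You instead use the divisibility $h_i\mid f_j-x_j$ to get the pointwise rank-one formula $df_p=I+\tilde g^{(i)}(p)\nabla h_i(p)^{T}$ on all of $H_i$, and the matrix determinant lemma then delivers both smoothness ($c\neq 1$ forces $\nabla h_i\neq 0$) and the eigenvalue relation $(df_p)^{T}\nabla h_i(p)=c\,\nabla h_i(p)$ in a single computation, collapsing the paper's two auxiliary lemmas into one. Your identification of the multiplier $v_i$ as a nonzero constant via composition with $f^{-1}$ replaces the paper's observation that $h$ and $h\circ f$ are two reduced generators of $I(H)$; both are fine, and both arguments work over an arbitrary algebraically closed field, as the statement requires.
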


We can say  more if the order of $f$ is infinite and the field
$\Bbb K$ has characteristic zero. Let $H\subset \Bbb K^n$ be a
hypersurface and $I(H)=(p).$ We say that $H$ is {\it super-smooth}
if $(\frac{\partial p}{\partial x_1}(x),..., \frac{\partial
p}{\partial x_n}(x))\not=0$  for every  $x\in \Bbb K^n$. We have:

\begin{theo}
Let $\Bbb K$ be an algebraically closed field of characteristic
zero. Let $f:\Bbb K^n\to \Bbb K^n$ be a non-special polynomial
automorphism of infinite order. Let $H$ be a hypersurface that is
contained in the set ${\rm Fix}(f)$ of fixed points of $f.$  Then

1) $H$ is smooth and irreducible.

2) $H={\rm Fix}(f).$

3) If $\lambda={\rm Jac}(f)$ has finite order in $\Bbb K^*$, then
$H$ is uniruled.

4) If $\lambda={\rm Jac}(f)$ has infinite order in $\Bbb K^*$,
then $H$ is super-smooth.  Moreover, if $\Bbb K=\C$, then the
Euler characteristic of $H$ is equal to $1.$
\end{theo}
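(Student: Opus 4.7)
Parts (1) and (2) are immediate from Theorem 1.1, so the real task is to establish (3) and (4). The crucial preliminary identity will be $p\circ f=\lambda p$, where $p$ is the (by Theorem 1.1, irreducible) defining polynomial of $H$. Indeed, because $H$ is $f$-invariant and $p$ is irreducible, $f^*p=cp$ for some $c\in\Bbb K^*$: writing $f^*p=p q$, the analogous factorization for $f^{-1}$ shows that $q$ divides $1$ in $\Bbb K[x_1,\dots,x_n]$, so $q$ is a nonzero constant. Differentiating $f^*p=cp$ gives $\nabla p(f(x))\,Df(x)=c\,\nabla p(x)$; restricting to $x\in H$, where $f(x)=x$ and $Df(x)|_{T_xH}=\mathrm{id}$ (since $f|_H=\mathrm{id}$), a basis of $T_x\Bbb K^n$ extending a basis of $T_xH$ puts $Df(x)$ in upper triangular form with diagonal $(1,\dots,1,\mu)$, forcing $\lambda=\mu$, while $\nabla p(x)$ is proportional to the last dual basis vector, giving $\nabla p(x)\,Df(x)=\mu\,\nabla p(x)$. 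Hence $c=\lambda$.

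Part (3) then follows at once. If $\lambda^N=1$, then $f^N$ is special and, since $f$ has infinite order, non-trivial. The irreducible hypersurface $H$ is contained in $\mathrm{Fix}(f^N)\neq\Bbb K^n$, so it is an $(n-1)$-dimensional component of $\mathrm{Fix}(f^N)$, and the uniruledness theorem for non-trivial special automorphisms recalled in the abstract applies.

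For the super-smoothness in (4), set $Z=\{x\in\Bbb K^n:\nabla p(x)=0\}$. From $\nabla p(f(x))\,Df(x)=\lambda\,\nabla p(x)$ and the invertibility of $Df$ one sees $f(Z)=Z$; moreover $Z\cap H=\emptyset$ by smoothness of $H$. Suppose $Z\neq\emptyset$ and pick $z\in Z$: then $p(z)\neq 0$ and $p(f^k(z))=\lambda^k p(z)$, so $p(Z)$ contains the infinite orbit $\{\lambda^k p(z)\}_{k\in\Z}$. Hence $p(Z)$ is an infinite constructible subset of $\Bbb K$, necessarily cofinite, whose complement is finite and $\lambda$-invariant; since $\lambda$ has infinite order, this complement lies in $\{0\}$, so every $a\in\Bbb K^*$ would be a critical value of $p$, contradicting the finiteness of the critical value set of a dominant morphism in characteristic zero. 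Thus $Z=\emptyset$.

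Finally, for the Euler characteristic statement over $\C$, $f$ restricts to a biholomorphism $H_a\to H_{\lambda a}$, where $H_a=p^{-1}(a)$, so the bifurcation set $B(p)\subset\C$ of the polynomial submersion $p$ is $\lambda$-invariant; being finite, it must lie in $\{0\}$. Over $\C^*$, $p$ is therefore a locally trivial $C^\infty$-fibration, so $\chi(p^{-1}(\C^*))=\chi(\C^*)\,\chi(H_{\mathrm{gen}})=0$, and additivity of Euler characteristic yields $1=\chi(\C^n)=\chi(H)+\chi(p^{-1}(\C^*))=\chi(H)$. The main obstacle I anticipate is the careful justification that the bifurcation set of a polynomial submersion $\C^n\to\C$ is indeed finite; this is classical (atypical values at infinity) but should be cited rather than re-proved.
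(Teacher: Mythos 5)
Your proposal is correct and follows essentially the same route as the paper: parts (1)--(2) by the earlier theorem, the identity $p\circ f=\lambda p$ via differentiation at points of $H$ (the paper's Lemma 2.3), part (3) by passing to the special, non-trivial automorphism $f^N$ and invoking the uniruledness theorem (the paper leaves this reduction implicit), and part (4) by observing that the set of atypical/critical values is finite and $\lambda$-invariant, hence contained in $\{0\}$, followed by the same locally-trivial-fibration Euler characteristic computation. Your phrasing of super-smoothness via the critical locus $Z$ and Chevalley's theorem is only a cosmetic repackaging of the paper's ``all fibers $h=t$, $t\neq 0$, are typical hence smooth'' argument, and the finiteness of the bifurcation set that you flag is exactly what the paper cites (Jelonek--Kurdyka) for.
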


\noindent For special automorphisms of infinite order, the set of
fixed points can have many $(n-1)$-dimensional components; the
easiest example is a triangular automorphism
$$f(x,y,z)=(x+ \prod^r_{i=1} h_i(y,z), y, z).$$ Moreover, as we
noticed before, such an $(n-1)$-dimensional component can be
singular. Additionally, if $H$ is the union of all
$(n-1)$-dimensional components of ${\rm Fix}(f)$, then in general
$H\not={\rm Fix}(f).$ However, the $(n-1)$-dimensional components
of the set of fixed points of a special automorphism have one
common property - they are uniruled:

\begin{theo}
Let $\Bbb K$ be an algebraically closed field of characteristic
zero. Let $f:\Bbb K^n\to \Bbb K^n$ be a non-trivial special
polynomial automorphism. Let $H$ be a hypersurface that is
contained in the set ${\rm Fix}(f)$ of fixed points of $f.$ Then
$H$ is uniruled, i.e., it is covered by rational curves.
\end{theo}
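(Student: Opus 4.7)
Let $p \in \Bbb K[x_1, \ldots, x_n]$ be the reduced polynomial with $I(H) = (p)$. Since $H \subseteq \mathrm{Fix}(f)$, each $f_i - x_i$ lies in $(p)$; choose the maximal $k \geq 1$ with $f_i - x_i \in (p^k)$ for every $i$ and write
\[
 f_i(x) - x_i = p(x)^k\, g_i(x), \qquad g = (g_1, \ldots, g_n),
\]
with $g \not\equiv 0 \pmod{p}$ by maximality of $k$. The first step is to deduce tangency of $g$ to $H$ from the special hypothesis. A direct calculation gives $Df = I + k\, p^{k-1}\, g(\nabla p)^{\top} + p^k J(g)$, and expanding the determinant via the rank-one identity $\det(I + uv^{\top}) = 1 + v^{\top} u$ --- with all higher minors and the $p^k J(g)$ contribution already in $(p^k)$ --- yields
\[
 \det Df \equiv 1 + k\, p^{k-1}\,\langle g, \nabla p\rangle \pmod{p^k}.
\]
The speciality $\det Df = 1$ now forces $\langle g, \nabla p\rangle \in (p)$: the vector field $g$ is tangent to $H$ at every smooth point. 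Equivalently, the derivation $D := \sum_i g_i \partial_{x_i}$ of $\Bbb K[x_1, \ldots, x_n]$ preserves $(p)$ and induces a non-zero derivation $\bar D$ on $\Bbb K[H]$, a regular vector field on the smooth locus of $H$ that does not vanish identically.

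My plan is then to promote $\bar D$ to the infinitesimal generator of an algebraic $\mathbb{G}_a$-action on $H$, equivalently, to prove $\bar D$ is locally nilpotent on $\Bbb K[H]$. Once this is in hand, the generic orbit is isomorphic to $\mathbb{A}^1$, covers a dense open subset of $H$ by rational curves, and the theorem follows. The crucial input is that $\bar D$ arises from a genuine polynomial automorphism: an induction on $m$ using $p\circ f \equiv p \pmod{(p^{k+1})}$ shows
\[
 f^m(x) \equiv x + m\, p(x)^k g(x) \pmod{(p^{k+1})}, \qquad m \in \Bbb Z,
\]
so the normal data of the $\Bbb Z$-action $\{f^m\}$ along $H$ is linear in $m$. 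My aim is then to extend this $\Bbb Z$-family to a polynomial one-parameter family $\{\Phi^t\}_{t \in \Bbb K}$ of algebraic maps in the formal neighborhood of $H$, inductively on the $p$-adic order, by solving the one-parameter group law $\Phi^t \circ \Phi^s = \Phi^{t+s}$ order by order. The speciality condition should be used at each stage to guarantee polynomial (rather than merely formal) dependence on $t$ of the corrections. Restricting $\Phi^t$ to $H$ produces the sought-after algebraic $\mathbb{G}_a$-action whose infinitesimal generator is $\bar D$.

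The principal obstacle is precisely this polynomial interpolation --- equivalently, directly establishing the local nilpotency of $\bar D$ on $\Bbb K[H]$. A formal one-parameter family extending $\{f^m\}_{m \in \Bbb Z}$ exists unconditionally, by the exponential--logarithm correspondence for maps tangent to identity in characteristic zero; what is non-trivial is its algebraicity. I expect the combination of $\det Df = 1$, the tangency condition $\langle g, \nabla p\rangle \in (p)$, and the maximality of $k$ to give enough rigidity to force polynomial $t$-dependence, perhaps through a degree estimate of the form $\deg(\bar D^j q) \leq c(\deg q,\deg f)$ that bounds the $\bar D$-orbit of a generator. As soon as $\bar D$ is shown to be locally nilpotent and non-zero, the induced $\mathbb{G}_a$-action on $H$ has non-constant orbits on a Zariski-dense open set, and these are the rational curves exhibiting $H$ as uniruled.
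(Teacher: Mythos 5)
Your reduction of the speciality hypothesis to the tangency condition $\langle g,\nabla p\rangle\in(p)$ is correct, but the step you yourself flag as the ``principal obstacle'' --- that the induced derivation $\bar D$ on $\Bbb K[H]$ is locally nilpotent, so that $H$ carries a nontrivial $\mathbb{G}_a$-action --- is not just unproven: it is false in general, so the strategy cannot be completed as stated. Take $f=f_1\circ f_2$ with $f_1(x,y,z)=(x+zy^2,y,z)$ and $f_2(x,y,z)=(x,y+zx^2,z)$. This is a non-trivial special automorphism of $\Bbb K^3$ fixing the hypersurface $H=\{z=0\}$ pointwise; here $p=z$, $k=1$, and on $H\cong\Bbb K^2$ one gets $\bar D=y^2\partial_x+x^2\partial_y$. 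Since $\bar D(x^3-y^3)=0$, the derivation preserves the pencil of cubics $x^3-y^3=c$, whose general member has genus one; a nonzero locally nilpotent derivation would produce non-constant maps $\mathbb{A}^1\to\{x^3-y^3=c\}$, which is impossible. So $\bar D$ is not locally nilpotent (here $H$ is of course still uniruled, but not via your mechanism). More generally, a nonvanishing regular vector field on an affine variety does not imply uniruledness, so the tangency observation alone cannot close the argument.

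The paper takes an entirely different route that avoids any integration of the infinitesimal data. It first invokes Proposition \ref{special} (via Cartan's linearization theorem) to conclude that a special $f$ with a hypersurface of fixed points must have infinite order; then, using $h\circ f=h$ from Lemma \ref{jac}, it forms the fiber product $X=\C^n\times_\C\Gamma$ of $h$ with a finite morphism $\pi:\Gamma\to\C$ from a non-rational curve, on which $F=f\times{\rm id}$ acts with infinite order. Theorem \ref{mathann} (a quasi-affine variety with infinite automorphism group is uniruled) then makes $X$ uniruled, and since $\Gamma$ is not uniruled the rational curves must lie in the fibers of $X\to\Gamma$, one of which is $H$. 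If you want to salvage your approach, you would need a substitute for local nilpotency --- but the external input of Theorem \ref{mathann} (or something of comparable strength) appears unavoidable.
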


\section{Non-special automorphisms}
We  first need an elementary lemma from linear algebra.

\begin{lem}\label{linear}
Let $X=\Bbb K^n$ and  let $F: X\to X$ be a linear isomorphism.
Assume that there exists a hyperplane $W$ which is contained in
the set of fixed points of $F.$ Then all eigenvalues of $F$ are
$1$ (of multiplicity at least $n-1$)  and ${\rm det}(F).$

Now assume that ${\rm det}(F)=\lambda\not=1.$ If $l$ is a linear
form such that ${\rm ker}\ l=W$, then the forms proportional to
the form $l$ are the only eigenvectors of $F^*$ with eigenvalue
$\lambda.$
\end{lem}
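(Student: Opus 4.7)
The plan is a direct matrix computation in a well-chosen basis. First, pick any basis $v_1,\ldots,v_{n-1}$ of the hyperplane $W$ and extend it by an arbitrary $v_n\notin W$ to a basis of $\Bbb K^n$. Since $W$ is pointwise fixed by $F$, we have $F(v_i)=v_i$ for $i=1,\ldots,n-1$, so the matrix of $F$ in the basis $(v_1,\ldots,v_n)$ takes the block form
\[
M_F=\begin{pmatrix} I_{n-1} & a \\ 0 & a_n \end{pmatrix},
\]
where $a\in\Bbb K^{n-1}$ and $a_n\in\Bbb K$ come from writing $F(v_n)=a_1v_1+\cdots+a_{n-1}v_{n-1}+a_nv_n$. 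The characteristic polynomial is $(t-1)^{n-1}(t-a_n)$ and $\det(F)=a_n$, which immediately yields the first assertion: the eigenvalues of $F$ are $1$ (of multiplicity at least $n-1$) together with $\det(F)$.

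For the second assertion, I would assume $\lambda:=\det(F)=a_n\neq 1$ and let $l_0$ denote the dual basis vector $v_n^*$, i.e., the unique linear form with $l_0(v_i)=0$ for $i<n$ and $l_0(v_n)=1$; then $\ker l_0=W$. Evaluating $(F^*l_0)(v_i)=l_0(F(v_i))$ on each basis vector directly from $M_F$ gives $0$ for $i<n$ and $a_n=\lambda$ for $i=n$, so $F^*l_0=\lambda l_0$; thus $l_0$ is a $\lambda$-eigenvector of $F^*$. Because any linear form $l$ with $\ker l=W$ is a scalar multiple of $l_0$, it only remains to show that the $\lambda$-eigenspace of $F^*$ is one-dimensional.

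This last step is immediate: $F^*$ has the same characteristic polynomial as $F$, namely $(t-1)^{n-1}(t-\lambda)$, and since $\lambda\neq 1$ the algebraic multiplicity of $\lambda$ equals $1$, forcing the geometric multiplicity to be $1$ as well. Hence the $\lambda$-eigenspace of $F^*$ is exactly $\Bbb K\cdot l_0$, which proves the claim. There is no substantive obstacle here: the whole argument is a normal-form observation, and the only minor computation is reading off $F^*l_0=\lambda l_0$ from the matrix $M_F$.
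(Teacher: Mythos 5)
Your proof is correct and follows essentially the same route as the paper: a direct matrix computation in a basis adapted to $W$. The only cosmetic difference is that the paper completes the basis of $W$ with a $\lambda$-eigenvector of $F$ (so $F$ becomes diagonal and the eigenform is read off at once), whereas you take an arbitrary complement and finish by comparing geometric with algebraic multiplicity; both are valid and equally elementary.
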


\begin{proof}
Since $W$ is contained in the set of fixed points of $F$, we have
that $1$ is an eigenvalue of $F$  of multiplicity at least dim
$W=n-1.$ Hence the remaining eigenvalue has to be equal to ${\rm
det}(F)=\lambda.$

Now assume that $\lambda\not=1$ and let $g$ be an eigenvector with
eigenvalue $\lambda.$ Let $\{w_1,..., w_{n-1}\}$ be a basis of
$W.$ It is easy to see that $\{w_1,..., w_{n-1}\}$ and $g$ form a
basis of $X.$ In this basis $F$ is given by the formula
$$F(x_1,...,x_{n-1}, x_n)=(x_1,..., x_{n-1}, \lambda x_n).$$
In particular, the hyperplane $W$ is described by the form $x_n$,
i.e., $l=cx_n$ for some $c\in \Bbb K^*.$ Moreover,
$$F^*(\sum^n_{i=1} a_i x_i)=a_1x_1+...+a_{n-1}x_{n-1}+ \lambda a_n
x_n.$$ Consequently, only the forms proportional to $x_n$ have
eigenvalues equal to $\lambda.$
\end{proof}

Moreover, we have:

\begin{lem}
Let $f:\Bbb K^n\to \Bbb K^n$ be a polynomial automorphism. Let
$H\subset \Bbb K^n$ be a hypersurface such that $H\subset \Bbb
K^n.$ If $H$ is singular, then ${\rm Jac}(f)=1.$
\end{lem}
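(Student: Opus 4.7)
\smallskip
\noindent\textbf{Proof proposal.}
My plan is to reduce the claim to a linear-algebra computation at a single point. The key general fact is that the Jacobian of a polynomial automorphism of $\Bbb K^n$ is a nonzero constant of $\Bbb K^*$: applying the chain rule to $f^{-1}\circ f=\mathrm{id}$ yields $\mathrm{Jac}(f)\cdot(\mathrm{Jac}(f^{-1})\circ f)=1$, and since both factors are polynomials, each must be a constant. So it suffices to produce a single point $x_0$ at which $\det d_{x_0}f=1$.

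The natural choice is a singular point $x_0$ of $H$, which exists by hypothesis. Writing $H=V(p)$ with $I(H)=(p)$, the Zariski tangent space to $H$ at $x_0$ is the kernel of the linear form $d_{x_0}p$; since $x_0$ is singular the gradient
\[
\bigl(\tfrac{\partial p}{\partial x_1}(x_0),\dots,\tfrac{\partial p}{\partial x_n}(x_0)\bigr)
\]
vanishes, so $d_{x_0}p=0$ and therefore $T_{x_0}H=T_{x_0}\Bbb K^n\cong\Bbb K^n$ equals the full ambient tangent space.

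The hypothesis $H\subset\mathrm{Fix}(f)$ means that $f|_H=\mathrm{id}_H$, so by functoriality the endomorphism induced on the local ring $\mathcal O_{H,x_0}$ is the identity, and hence $d_{x_0}f$ restricts to the identity on $T_{x_0}H$. Combining this with the previous paragraph, $d_{x_0}f$ is the identity on all of $T_{x_0}\Bbb K^n$, which gives $\mathrm{Jac}(f)(x_0)=\det d_{x_0}f=1$; the constancy of $\mathrm{Jac}(f)$ then upgrades this to $\mathrm{Jac}(f)\equiv 1$. I do not anticipate a serious obstacle here: the whole argument reduces to the one-line linear-algebra observation that a singular point of a hypersurface has ambient-dimensional Zariski tangent space, together with the standard functorial description of the differential in terms of $m_{x_0}/m_{x_0}^2$, which is available at singular as well as at smooth points — and it is precisely this availability that makes a singular point of $H$ so restrictive for $f$.
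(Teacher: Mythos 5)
Your proof is correct and follows essentially the same strategy as the paper's: both arguments reduce to showing that the differential of $f$ at a singular point of $H$ is the identity and then invoke the constancy of the Jacobian of a polynomial automorphism. The only difference is in packaging --- the paper uses $h\mid f_i-x_i$ to write $f_i=x_i+a_ih$ and computes the Jacobian matrix at the singular point directly, using that $h$ has no linear part there, whereas you reach the same conclusion intrinsically via functoriality of the Zariski tangent space together with the observation that $T_{x_0}H=\Bbb K^n$ at a singular point.
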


\begin{proof}
Let $a\in {\rm Sing}(H).$ Choose a system of coordinates in which
$a=(0,..., 0).$ Let $h$ be a reduced equation of $H.$ Since
$H\subset {\rm Fix}(f)$ we have that $f_i-x_i$ vanishes on $H$,
i.e., $h| f_i-x_i.$ Consequently,
$$f_i=x_i+a_ih, \ i=1,...,n.$$ Since $h=\sum_{|\alpha|\ge 2} h_\alpha x^\alpha,$
we have ${\rm Jac}(f)={\rm Jac}({\rm identity})=1.$
\end{proof}

\begin{lem}\label{jac}
Let $f:\Bbb K^n\to \Bbb K^n$ be a polynomial automorphism. Let
$H\subset \Bbb K^n$ be an irreducible hypersurface, such that
$H\subset {\rm Fix}(f).$ Let $h$ be a reduced equation of $H.$
Then $h\circ f=\lambda h$, where $\lambda={\rm Jac}(f).$
\end{lem}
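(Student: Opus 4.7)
The plan is to split the proof into two steps: first, show $h\circ f=c\,h$ for some constant $c\in\Bbb K^\ast$, and then identify $c$ with $\mathrm{Jac}(f)$ by linearizing at a smooth point of $H$ and invoking Lemma~\ref{linear}.

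For the first step, since $H\subset\mathrm{Fix}(f)$, the polynomial $h\circ f$ vanishes on $H$. Because $H$ is irreducible, its reduced equation $h$ is (up to a scalar) irreducible, so $h\mid h\circ f$; write $h\circ f=g\cdot h$ with $g\in\Bbb K[x_1,\dots,x_n]$. Applying the same reasoning to $f^{-1}$ gives $h\circ f^{-1}=g'\cdot h$ for some polynomial $g'$. Composing the first identity on the right with $f^{-1}$ yields $h=(g\circ f^{-1})(h\circ f^{-1})=(g\circ f^{-1})\,g'\cdot h$, so $(g\circ f^{-1})\,g'=1$. Since the only units of $\Bbb K[x_1,\dots,x_n]$ are nonzero constants, $g=c\in\Bbb K^\ast$.

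For the second step, pick a smooth point $a\in H$, which exists because $H$ is an irreducible hypersurface. Since $f$ fixes $H$ pointwise, differentiating any local curve in $H$ through $a$ shows that $Df(a)$ fixes the tangent hyperplane $T_aH=\ker\nabla h(a)$ pointwise. By Lemma~\ref{linear} applied to $F=Df(a)$ and $W=T_aH$, the eigenvalues of $Df(a)$ are $1$ with multiplicity $n-1$ and $\det Df(a)=\mathrm{Jac}(f)=\lambda$. Differentiating the identity $h\circ f=c\,h$ at $a$ (using $f(a)=a$) gives $\nabla h(a)\,Df(a)=c\,\nabla h(a)$, so $\nabla h(a)^T$ is an eigenvector of $Df(a)^T$ with eigenvalue $c$; note that $\nabla h(a)\neq 0$ by smoothness. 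If $\lambda=1$, every eigenvalue of $Df(a)^T$ equals $1$, so $c=1=\lambda$. If $\lambda\neq 1$, the second part of Lemma~\ref{linear} identifies the $\lambda$-eigenspace of $Df(a)^T$ as the line of forms proportional to the defining form of $W$, which is $\nabla h(a)$; thus $\nabla h(a)^T$ is itself a $\lambda$-eigenvector, forcing $c=\lambda$. Either way $c=\mathrm{Jac}(f)$, as required.

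The main subtlety to anticipate is the case $\lambda=1$, where the second part of Lemma~\ref{linear} does not apply; fortunately the first part already pins all eigenvalues of $Df(a)$ down to $1$, so $c=\lambda$ is automatic. A minor prerequisite used implicitly throughout is that $\mathrm{Jac}(f)$ is a nonzero constant for any polynomial automorphism, which also underlies the constancy of $c$ in Step~1.
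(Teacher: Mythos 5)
Your proof is correct and follows essentially the same route as the paper: establish $h\circ f=c\,h$ for a constant $c\in\Bbb K^*$, then differentiate at a smooth point $a\in H$ and use Lemma~\ref{linear} (splitting into the cases $\lambda=1$ and $\lambda\neq 1$) to conclude $c=\mathrm{Jac}(f)$. The only cosmetic difference is in Step~1, where you derive the constancy of the cofactor via divisibility and the $f^{-1}$ unit argument, while the paper simply notes that $h$ and $h\circ f$ are both reduced generators of $I(H)$.
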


\begin{proof}
Let $a\in H$ be a smooth point of $H.$ Take $W=T_a H$, $X=T_a \Bbb
K^n$ and $F=d_a f.$ By the assumption, the subspace $W$ is
contained in the set of fixed points of the linear isomorphism
$F.$ Moreover, $W$ is described by the linear form $l=\sum^n_{i=1}
\frac{\partial h}{\partial x_i}(a) x_i=0.$ This form can be
identified with the vector ${\rm grad}_a h= (\frac{\partial
h}{\partial x_1}(a),..., \frac{\partial h}{\partial x_n}(a)).$

Since $f$ is a polynomial automorphism and $H\subset {\rm
Fix}(f),$ the polynomial $h$ describes the same hypersurface as
the polynomial $h(f).$ Note that these two polynomials are reduced
and consequently they are generators of the same ideal $I(H).$
This means that there exists a constant $c\in \Bbb K^*$ such that
$h(f)=c\cdot h.$ After differentiation, we have $(d_a f)^* {\rm
grad}_a h= c\cdot {\rm grad}_a h.$ Hence the vector ${\rm grad}_a
h$ is an eigenvector of $F^*.$ Now if $\lambda={\rm Jac}(f)=1$,
then by Lemma \ref{linear}, we have that all eigenvalues of $F$
(and hence also of $F^*$) are equal to $1.$ Consequently,
$c=\lambda=1.$ If $\lambda\not=1$, then again by Lemma
\ref{linear}, we have $c=\lambda.$
\end{proof}

Now we are ready to prove:

\begin{theo}\label{th1}
Let $f:\Bbb K^n\to \Bbb K^n$ be a non-special polynomial
automorphism. Let $H$ be a hypersurface that is contained in the
set ${\rm Fix}(f)$ of fixed points of $f.$ Then

1) $H$ is smooth and irreducible.

2) $H={\rm Fix}(f).$
\end{theo}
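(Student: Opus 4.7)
The plan is to push the inclusion $H\subseteq\mathrm{Fix}(f)$ into a polynomial identity that captures the non-specialness $\lambda := \mathrm{Jac}(f) \ne 1$. Let $h$ be a reduced equation of $H$. Since each $f_i - x_i$ vanishes on $H$, $h$ divides $f_i - x_i$, so we may write
\[
f_i = x_i + h\,b_i, \qquad i = 1,\ldots,n,
\]
with $b_1,\ldots,b_n\in\K[x]$. At a smooth point of $H$ the Jacobian matrix becomes the rank-one perturbation $I + b\,(\nabla h)^T$, whose determinant is $1 + \langle b,\nabla h\rangle$. Equating with the constant $\lambda$ yields the key identity
\[
\langle b,\nabla h\rangle = (\lambda - 1) + h\,q \qquad \text{for some } q\in\K[x].
\]
Smoothness of $H$ is then immediate from the contrapositive of Lemma~2.2: otherwise $\lambda$ would equal $1$.

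For irreducibility, suppose $H = H_1 \cup \cdots \cup H_k$ with distinct irreducible components $H_j = V(h_j)$ and $k \ge 2$, so that $h = h_1\cdots h_k$. By Lemma~2.3 each satisfies $h_j\circ f = \lambda h_j$, and the finite Taylor expansion of $h_j(x + h\,b)$ in the direction $h\,b$ gives
\[
\lambda h_j \;=\; h_j + h\,\langle b,\nabla h_j\rangle + h^2\,\beta_j + \cdots,
\]
so $(\lambda - 1)h_j = h\cdot A_j$ for some $A_j\in\K[x]$. Substituting $h = h_j\prod_{i\ne j}h_i$ and cancelling the non-zero-divisor $h_j$,
\[
\lambda - 1 \;=\; \Bigl(\prod_{i\ne j}h_i\Bigr)\,A_j;
\]
the left side is a nonzero constant, so $\prod_{i\ne j}h_i$ would have to be a unit in $\K[x]$, which is impossible once $k\ge 2$. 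Hence $k = 1$ and $H$ is irreducible.

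For the equality $H = \mathrm{Fix}(f)$, the factorisation $f_i - x_i = h\,b_i$ gives
\[
\mathrm{Fix}(f) \;=\; V(h\,b_1,\ldots,h\,b_n) \;=\; V(h) \cup V(b_1,\ldots,b_n) \;=\; H \cup Z.
\]
The key identity shows $\lambda - 1 \in (b_1,\ldots,b_n) + (h)$, so $(b_i) + (h) = (1)$ and the Nullstellensatz gives $H \cap Z = \emptyset$. Moreover, applying the irreducibility argument of the previous paragraph to the \emph{union} of all codimension-one components of $\mathrm{Fix}(f)$ — rather than just to $H$ — shows that this union is a single irreducible hypersurface, which must therefore be $H$. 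The main remaining obstacle, and what I expect to be the delicate step, is ruling out positive-codimension components of $Z$: at any hypothetical $a\in Z$ one has $\mathrm{d}f(a) = I + h(a)\,J_b(a)$ with $\det = \lambda$ and $h(a)\ne 0$, and I would combine this local constraint with the global identity $\langle b,\nabla h\rangle = (\lambda-1) + h\,q$ and its analogue obtained by applying the same analysis to $f^{-1}$, in order to upgrade $(b_i) + (h) = (1)$ to $(b_1,\ldots,b_n) = (1)$; this forces $Z = \emptyset$ and completes the proof.
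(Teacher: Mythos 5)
Your key identity, the smoothness claim via the contrapositive of Lemma~2.2, and the cancellation argument for irreducibility (writing $h=h_1\cdots h_k$, using $h_j\circ f=\lambda h_j$ from Lemma~2.3 to get $(\lambda-1)h_j=hA_j$, and cancelling $h_j$) are all correct. But part~2) is genuinely incomplete, and you say so yourself: after decomposing $\mathrm{Fix}(f)=V(h)\cup V(b_1,\ldots,b_n)=H\cup Z$ and proving $H\cap Z=\emptyset$, you still have to show $Z=\emptyset$, i.e.\ that $\mathrm{Fix}(f)$ has no lower-dimensional components off $H$. The final paragraph only announces a strategy (``I would combine this local constraint with the global identity \dots in order to upgrade $(b_i)+(h)=(1)$ to $(b_1,\ldots,b_n)=(1)$''); no such upgrade is carried out, and it is not clear how the local condition $\det(I+h(a)J_b(a))=\lambda$ at a hypothetical $a\in Z$ would yield a contradiction. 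As written, statement~2) is not proved.

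The gap closes in one line from an identity you already have, and this is exactly what the paper does. Take a single irreducible component $S$ of $H$ with reduced equation $s$; Lemma~2.3 gives $s\circ f=\lambda s$. If $x$ is \emph{any} fixed point of $f$, then $s(x)=s(f(x))=\lambda s(x)$, hence $(\lambda-1)s(x)=0$ and, since $\lambda\neq 1$, $s(x)=0$. Thus $\mathrm{Fix}(f)\subseteq V(s)=S$, so no component of $\mathrm{Fix}(f)$ of any dimension can lie outside $S$; combined with $S\subseteq H\subseteq\mathrm{Fix}(f)$ this gives $H=S=\mathrm{Fix}(f)$ and delivers irreducibility for free. (The paper phrases this as: $f$ maps the level set $\{s=t\}$ onto $\{s=\lambda t\}$, which is disjoint from it for $t\neq 0$.) In particular your $Z=\emptyset$ follows a posteriori, and the rank-one-perturbation computation and the Taylor-expansion factorisation, while correct, are not needed: everything reduces to evaluating $s\circ f=\lambda s$ at a fixed point.
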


\begin{proof}
Let $S$ be an irreducible component of $H$ with reduced equation
$s=0.$ By Lemma \ref{jac}, we have $s\circ f=\lambda s$ and
$\lambda\not=0.$ For $t\not=0,$ the hypersurface $S_t:=\{ x:
s(x)=t\}$ is transformed by $f$ onto the hypersurface $S_{\lambda
t}.$ Since $S_t\cap S_{\lambda t}=\emptyset$ for $t\not=0,$ we
have ${\rm Fix}(f)=S_0=S.$ In particular, $H=S$ is an irreducible
hypersurface.
\end{proof}

Now we show that non-trivial automorphisms of finite order and
with large  set of fixed points  cannot be special. We start with:

\begin{lem}\label{lemat}
Let $L:\C^n\to \C^n$ be a linear mapping of finite order $m>1$ and
assume that the set of fixed points of $L$ is a hyperplane $W$.
Then in some coordinates, $$L(x_1, x_2, \ldots, x_n)=(\epsilon
x_1, x_2,\ldots, x_n),$$ where $\epsilon^m=1$ and
$\epsilon\not=1.$
\end{lem}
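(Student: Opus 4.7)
The plan is to combine Lemma \ref{linear} with the diagonalizability of a finite-order linear map over $\C$. First I would apply Lemma \ref{linear} to $F=L$: since the hyperplane $W$ is contained in the set of fixed points of $L$, the eigenvalue $1$ occurs with multiplicity at least $n-1$, and the remaining eigenvalue must equal $\det(L)$. Call this remaining eigenvalue $\epsilon$.

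Next I would argue that $L$ is diagonalizable. Because $L^m=\text{id}$, the minimal polynomial of $L$ divides $x^m-1$, which has $m$ distinct roots in $\C$; hence $L$ is diagonalizable. In particular, $\epsilon^m=1$. Moreover, $\epsilon\neq 1$: otherwise a diagonalizable $L$ with all eigenvalues equal to $1$ would be the identity, contradicting the assumption that $L$ has order $m>1$.

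Finally, I would build the coordinate system explicitly. Pick a non-zero eigenvector $v_1$ of $L$ with eigenvalue $\epsilon$. Since $Lv_1=\epsilon v_1\neq v_1$, the vector $v_1$ does not lie in $W$. Choose a basis $v_2,\dots,v_n$ of $W$. Then $v_1,v_2,\dots,v_n$ form a basis of $\C^n$, and in the dual coordinates we have
\[
L(x_1,x_2,\dots,x_n)=(\epsilon x_1, x_2,\dots, x_n),
\]
which is the desired normal form.

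The only potentially delicate step is the diagonalizability claim, but over $\C$ it is immediate from the fact that $x^m-1$ is separable in characteristic zero; everything else is essentially unpacking Lemma \ref{linear} and choosing a convenient basis. So there is no real obstacle here — the lemma is a direct consequence of Lemma \ref{linear} together with the standard fact that finite-order automorphisms in characteristic zero are diagonalizable.
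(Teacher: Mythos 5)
Your proof is correct, and it reaches the normal form by a genuinely different route than the paper. The paper's argument does not invoke Lemma \ref{linear} or diagonalizability at all: it takes an arbitrary basis $e_1,\dots,e_n$ with $e_2,\dots,e_n$ spanning $W$, writes $L(e_1)=\sum_i a_ie_i$, and argues that finite order forces $a_i=0$ for $i>1$, leaving $a_1=\epsilon$ a non-trivial $m$-th root of unity. You instead read off the spectrum ($1$ with multiplicity at least $n-1$, plus $\det L$) from Lemma \ref{linear}, note that $L^m=\mathrm{id}$ makes the minimal polynomial divide the separable polynomial $x^m-1$, hence $L$ is diagonalizable (so $\epsilon^m=1$ and $\epsilon\neq 1$), and then build the basis from an actual $\epsilon$-eigenvector $v_1$ together with a basis of $W$; your observation that $Lv_1=\epsilon v_1\neq v_1$ forces $v_1\notin W$ is exactly what makes these $n$ vectors a basis. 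Your route is arguably tidier at the one delicate spot: in the paper's arbitrary adapted basis the coefficients $a_i$ ($i>1$) need not literally vanish when $a_1\neq 1$ (they can be absorbed by replacing $e_1$ with $e_1+\sum_{i>1}\frac{a_i}{a_1-1}e_i$), so the assertion there is really the semisimplicity of finite-order operators in characteristic zero, which you make explicit. The paper's computation, on the other hand, is self-contained and does not depend on Lemma \ref{linear}. Both arguments are correct and of comparable length.
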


\begin{proof}
Take a basis $e_1,\ldots, e_n$ in $\C^n$ such that $e_2,\ldots,
e_{n}$ span the hyperplane $W.$ Hence $L(e_i)=e_i$ for $i>1$ and
$L(e_1)=\sum^n_{i=1} a_i e_i.$ In particular, $L(x_1
e_1+\ldots+x_n e_n)=(a_1x_1)e_1+(x_2+a_2x_1)e_2+
\ldots+(x_{n}+a_{n}x_1)e_{n}.$ Since $L$ has  finite order, we
have $a_i=0$ for $i>1.$ In particular, $L(x_1,\ldots, x_n)=(a_1
x_1, x_2,\ldots, x_{n}).$ However {\rm det} $L^m= 1$, i.e.,
$a_1=\epsilon,$ where $\epsilon^m=1$ and $\epsilon\not=1.$
\end{proof}

Now we can state:

\begin{pr}\label{special}
Let $\Phi: \Bbb C^n\to\Bbb C^n$ be a polynomial automorphism of
finite order $m>1.$ Assume that the set  of fixed points of $\Phi$
has dimension $n-1.$ Then $\Phi$ is not special.
\end{pr}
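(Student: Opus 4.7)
My plan is to assume $\Phi$ is special, i.e.\ $\lambda := {\rm Jac}(\Phi) = 1$, and to derive a contradiction by analyzing the differential of $\Phi$ at a smooth point of the fixed hypersurface. Let $H$ be an $(n-1)$-dimensional component of ${\rm Fix}(\Phi)$, pick a smooth point $a \in H$, and set $L := d_a\Phi$. Since $\Phi^m = {\rm id}$, $L$ is a linear endomorphism of $\C^n$ of finite order dividing $m$, and since $H$ is pointwise fixed by $\Phi$ and smooth at $a$, $L$ fixes the hyperplane $W := T_a H$ pointwise. Hence the fixed subspace of $L$ is either $W$ or all of $\C^n$.

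In the first case $L$ has finite order $> 1$ with fixed set exactly the hyperplane $W$, so Lemma \ref{lemat} applies and yields $\det L = \epsilon$ with $\epsilon^m = 1$ and $\epsilon \neq 1$. But $\det L$ equals the value at $a$ of the (constant) polynomial ${\rm Jac}(\Phi) = \lambda$, which is $1$ by the special assumption, a contradiction. So the whole proof reduces to excluding the case $L = {\rm id}$.

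To handle the case $L = {\rm id}$ I would invoke the standard germ-theoretic fact that a holomorphic map of finite order which fixes a point and has identity differential there is automatically the identity germ. Concretely, write $\Phi(z) = z + g(z - a)$ locally, assume $\Phi \neq {\rm id}$, and let $g_k$ (with $k \geq 2$) be the lowest-degree nonzero homogeneous component of $g$. Using $g(w + g(w)) = g(w) + O(|w|^{2k-1})$ with $2k-1 \geq k+1$, an elementary induction gives $\Phi^j(z) = z + j\, g(z - a) + O(|z - a|^{k+1})$. Taking $j = m$ and using $\Phi^m = {\rm id}$ forces $m g_k \equiv 0$, which is impossible in characteristic zero, so $\Phi$ agrees with ${\rm id}$ to all orders at $a$. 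Since $\Phi$ is polynomial, $\Phi = {\rm id}$, contradicting $m > 1$.

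The principal obstacle is precisely this $L = {\rm id}$ case, on which Lemma \ref{lemat} is silent (it assumes the fixed set of the linear map is exactly a hyperplane). The jet computation above, or equivalently Bochner-type linearization of the cyclic $\Z/m\Z$-action near the fixed point $a$, is what bridges this gap and completes the argument.
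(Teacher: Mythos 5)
Your argument is correct, but it takes a genuinely different (and more self-contained) route than the paper. The paper's proof invokes Cartan's theorem \cite{car} to holomorphically linearize $\Phi$ in a neighborhood of a point of $H$ and then applies Lemma \ref{lemat} to the resulting linear map; it does not separately address the possibility that the linearization is the identity (which is immediately absurd, since a polynomial map equal to the identity on an open set is the identity). You instead work directly with the differential $L=d_a\Phi$ at a smooth point $a$ of $H$: the chain rule gives $L^m={\rm id}$, the pointwise fixing of $H$ gives $L|_{T_aH}={\rm id}$, so the fixed subspace of $L$ is either the hyperplane $T_aH$ or all of $\C^n$. In the first case Lemma \ref{lemat} applies to $L$ itself and yields $\det L={\rm Jac}(\Phi)=\epsilon\not=1$, contradicting speciality; in the second case $L={\rm id}$ and your jet computation (the standard fact that, in characteristic zero, a finite-order germ with identity differential at a fixed point is the identity germ) forces $\Phi={\rm id}$, contradicting $m>1$. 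Your Taylor-expansion argument is essentially the core of the proof of the Cartan/Bochner linearization theorem, so the two proofs are close in spirit and both ultimately rest on Lemma \ref{lemat}; what yours buys is independence from the reference \cite{car} and an explicit treatment of the degenerate case $d_a\Phi={\rm id}$, which the paper passes over in silence. I see no gap in your argument.
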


\begin{proof}
Let $H\subset {\rm Fix}(\Phi)$ be a hypersurface and let $x\in H.$
By the Cartan Theorem (see \cite{car}), the mapping $\Phi$ is
holomorphically linearizable in some neighborhood of $x.$ Now the
proof reduces to Lemma \ref{lemat}.
\end{proof}

We conclude this section by:

\begin{theo}\label{th2}
Let $\Bbb K$ be an algebraically closed field of characteristic
zero. Let $f:\Bbb K^n\to \Bbb K^n$ be a non-special polynomial
automorphism of infinite order. Let $H$ be a hypersurface that is
contained in the set ${\rm Fix}(f)$ of fixed points of $f.$ Then

1) $H$ is smooth and irreducible.

2) $H={\rm Fix}(f).$

3) If $\lambda={\rm Jac}(f)$ has finite order in $\Bbb K^*$, then
$H$ is uniruled.

4) If $\lambda={\rm Jac}(f)$ has infinite order in $\Bbb K^*$,
then $H$ is super-smooth.  Moreover, if $\Bbb K=\C$, then the
Euler characteristic of $H$ is equal to $1.$
\end{theo}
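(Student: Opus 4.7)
Parts (1) and (2) are immediate from Theorem \ref{th1}, whose hypotheses do not require infinite order. Throughout what follows, let $h$ be a reduced equation of $H$, so that Lemma \ref{jac} gives $h\circ f=\lambda h$ with $\lambda={\rm Jac}(f)$.

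For (3), assume $\lambda$ has finite order $m$ in $\Bbb K^*$ and set $g:=f^m$. Then ${\rm Jac}(g)=\lambda^m=1$, so $g$ is special; and since $f$ has infinite order, $g$ is non-trivial. From $H\subset{\rm Fix}(f)\subset{\rm Fix}(g)$, the third theorem of the introduction applied to $g$ yields that $H$ is uniruled.

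For the super-smoothness part of (4), assume $\lambda$ has infinite order and let $S:=\{x\in\Bbb K^n:{\rm grad}_x h=0\}$. Differentiating $h\circ f=\lambda h$ gives $(d_xf)^T\cdot{\rm grad}_{f(x)} h=\lambda\,{\rm grad}_x h$, and invertibility of $d_xf$ yields $x\in S\iff f(x)\in S$, so $f(S)=S$. Since $H$ is smooth by (1), $S\cap H=\emptyset$. If $S$ were non-empty, any $x\in S$ would satisfy $h(x)\neq0$, and the orbit $\{f^k(x)\}_{k\in\Z}\subset S$ would contribute the infinite set $\{\lambda^k h(x)\}_{k\in\Z}$ to the set $h(S)$ of critical values of $h$. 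In characteristic zero, however, the set of critical values of the non-constant polynomial $h:\Bbb K^n\to\Bbb K$ is finite by generic smoothness. This contradiction gives $S=\emptyset$, so $H$ is super-smooth; as a by-product every fibre $H_t=\{h=t\}$ of $h$ is smooth.

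For the Euler characteristic over $\Bbb K=\C$: the identity $h\circ f=\lambda h$ implies that $f$ induces isomorphisms $H_t\xrightarrow{\sim} H_{\lambda t}$ for every $t\in\C$, so $\chi(H_t)=\chi(H_{\lambda t})$. The function $t\mapsto\chi(H_t)$ is constructible, hence constant on a Zariski-open $U\subset\C$, equal to some generic value $\chi_g$; for any $t\in\C^*$, the orbit $\{\lambda^k t\}_{k\in\Z}$ is Zariski-dense in $\C$ (since $\lambda$ has infinite order), hence meets $U$, forcing $\chi(H_t)=\chi_g$ throughout $\C^*$. Additivity of the (compactly supported) Euler characteristic along $h$ then yields
$$1=\chi(\C^n)=\chi(H_0)+\chi_g\cdot\chi(\C^*)=\chi(H),$$
using $\chi(\C^*)=0$. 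I expect the main obstacle to be this last paragraph, which uses both the constructibility of $t\mapsto\chi(H_t)$ (a stratification input that produces $U$) and the additivity formula for Euler characteristic along a morphism; both are standard facts for complex algebraic varieties but need to be cited carefully.
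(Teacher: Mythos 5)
Your proof is correct, and for parts 1)--3) it follows the paper exactly (the paper is terse about 3), saying only that it follows from Theorem \ref{th3}; your reconstruction via $g=f^m$, with ${\rm Jac}(g)=\lambda^m=1$ and $g\neq{\rm id}$ because $f$ has infinite order, is precisely the intended argument). For part 4) you share the paper's key idea --- use $h\circ f=\lambda h$ and the infinite order of $\lambda$ to push any ``bad'' value of $h$ around an infinite orbit $\{\lambda^k t\}$, contradicting finiteness --- but you instantiate it differently. The paper applies the orbit argument to \emph{atypical} values (citing \cite{j-k} for their finiteness), concluding that every fiber $H_t$, $t\neq 0$, is typical; this yields smoothness of all fibers (hence super-smoothness) and, simultaneously, that $h:\C^n\setminus H\to\C^*$ is a locally trivial fibration, so that multiplicativity gives $\chi(H_1)\chi(\C^*)+\chi(H)=\chi(\C^n)=1$. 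You instead apply the orbit argument directly to the critical locus $S=\{{\rm grad}\,h=0\}$, which is $f$-invariant by differentiating $h\circ f=\lambda h$, and invoke only the finiteness of the set of critical values (generic smoothness); this is more elementary --- it avoids asymptotic critical values entirely and works over any $\K$ of characteristic zero without the Lefschetz reduction. For the Euler characteristic you then substitute constructibility of $t\mapsto\chi(H_t)$ plus the additivity/fiber-integration formula for $\chi_c$ in place of the paper's local triviality over $\C^*$; both routes are standard and give $\chi(H)=1$ since $\chi(\C^*)=0$. The only point to document carefully is the one you flag yourself: the constructibility of the fiberwise Euler characteristic and the formula $\chi_c(h^{-1}(\C^*))=\chi_g\cdot\chi_c(\C^*)$, which follow from the existence of a stratification of $\C^*$ over which $h$ is locally trivial --- essentially the same input the paper takes from the theory of atypical values.
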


\begin{proof}
By the Lefschetz principle, we can assume that $\Bbb K=\C.$ We
have 1) and 2) by Theorem \ref{th1}. The point 3) follows from
Theorem \ref{th3} below. Hence it is enough to prove 4). Let
$H={\rm Fix}(f).$ As we know the hypersurface $H$ is irreducible.
Let $h=0$ be an irreducible equation for $H.$ By Lemma \ref{jac}
we have $h\circ f=\lambda h.$

We show that  the polynomial $h$ has no atypical values, except
possibly $0.$  Indeed, since the fiber $h=t$ is transformed by $f$
onto the fiber $h=\lambda t$ and $\lambda$ has an infinite order
we have that for $t\not=0$ the fiber $h=t$ cannot by atypical
(there is only a finite number of atypical values of $h$-for
details see e.g., \cite{j-k}). In particular, all fibers $h=t$ for
$t\not=0$ are homeomorphic and $f: \C^n\setminus H\to \C^*$ is a
locally trivial fibration. Computing the  Euler characteristics,
we have $\chi(H_1)\chi(\C^*)+\chi(H)=\chi(\C^n)=1$, i.e.,
$\chi(H)=1.$

Moreover, all fibers of $h$ are smooth. For $H_0$ it follows from
1), if $t\not=0$, then $H_t$ is a typical fiber, hence it is
smooth.
\end{proof}

\section{Main Result}

First we recall the following important fact (see \cite{jel3}):

\begin{theo}\label{mathann}
Let $\Bbb K$ be an algebraically closed field of characteristic
zero. Let $X$ be a quasi-affine variety.  If the group ${\rm
Aut}(X)$ is infinite, then $X$ is uniruled.
\end{theo}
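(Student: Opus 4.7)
\medskip
\noindent\textbf{Proof proposal.}
My plan is to promote the hypothesis that ${\rm Aut}(X)$ is infinite to the existence of a positive-dimensional algebraic subgroup of ${\rm Aut}(X)$; a one-parameter subgroup of the latter will then sweep out rational curves on $X$.

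By the Lefschetz principle I may assume $\Bbb K = \C$. I fix a quasi-affine embedding $X \subset \C^N$; any $f \in {\rm Aut}(X)$ is prescribed by $N$ regular functions on $X$, admitting polynomial extensions to $\C^N$ of some minimal total degree $d(f)$. For each $d \geq 1$ the set ${\rm Aut}_d(X) = \{ f \in {\rm Aut}(X) : d(f) \leq d\}$ is a constructible subset of a finite-dimensional affine space of polynomial tuples (cut out by the conditions that $f$ preserves $X$ and is invertible with inverse of degree controlled by $d$), and ${\rm Aut}(X) = \bigcup_d {\rm Aut}_d(X)$ as an ind-variety.

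The main dichotomy is whether the degrees $d(f)$ are bounded on ${\rm Aut}(X)$. In the bounded case, some ${\rm Aut}_d(X)$ is infinite, and its Zariski closure has a positive-dimensional irreducible component $Z$. For a chosen $f_0 \in Z$, the set $\{f_0^{-1} \circ f : f \in Z\}$ is a positive-dimensional constructible set of automorphisms containing the identity; taking the Zariski closure of the group it generates inside the ind-group ${\rm Aut}(X)$ yields a positive-dimensional connected algebraic subgroup $G \subseteq {\rm Aut}(X)$ acting on $X$. Since $X$ is quasi-affine, $G$ is a linear algebraic group, so $G$ contains a one-parameter subgroup $H$ isomorphic to $\G_a$ or $\G_m$; the $H$-orbit through a general point of $X$ is a non-trivial rational curve, proving uniruledness.

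In the unbounded-degree case I would place the graphs $\Gamma_f \subset X \times X$ (or rather their closures in a projective completion $\bar X \times \bar X$) into the Chow variety of $\bar X \times \bar X$. Each connected component of that Chow variety is of finite type, so infinitely many graphs of unbounded degree must accumulate on at least one positive-dimensional component; a general member yields an algebraic correspondence parametrized by a positive-dimensional variety $T$, from which rational curves through general points of $X$ can be extracted. The main obstacle is precisely this Chow-theoretic limiting argument: one must guarantee that the limit correspondences remain ``automorphism-like'' enough to yield honest rational curves on $X$ itself, and this is where the quasi-affineness of $X$, combined with a careful choice of compactification controlling the boundary, becomes essential.
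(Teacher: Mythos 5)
You should first note that the paper does not prove this statement at all: Theorem \ref{mathann} is quoted from \cite{jel3} (it is the main theorem of a separate paper of Jelonek), so there is no in-paper proof to match and your argument must stand on its own. It does not, because each half of your dichotomy has a genuine gap. In the bounded-degree case, the decisive step --- passing from a positive-dimensional irreducible family $Z$ of automorphisms through the identity to a positive-dimensional \emph{algebraic} subgroup $G\subseteq {\rm Aut}(X)$ by ``taking the Zariski closure of the group it generates'' --- is unjustified and false in general. In an ind-group the subgroup generated by an algebraic subset $Z\ni \mathrm{id}$ is an increasing union $\bigcup_n (ZZ^{-1})^n$ of algebraic subsets whose dimensions need not stabilize; for instance the triangular automorphisms of $\C^2$ of degree at most $2$ form such a family, and they generate an infinite-dimensional group whose closure is not an algebraic group acting on $X$. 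Without an honest algebraic group you get no $\G_a$ or $\G_m$ and hence no rational orbits: all that follows directly from $Z$ is that $X$ is swept out by images of $Z$, and if $Z$ is a curve of positive genus this says nothing about uniruledness. Filling this gap amounts to a rigidity statement (non-uniruledness forces any irreducible family of automorphisms to be a single point, in the spirit of Matsusaka--Mumford), which is essentially the content of the theorem. The endgame you describe --- a faithful connected algebraic group action on a quasi-affine variety forces the group to be linear, hence to contain a one-parameter subgroup with rational general orbits --- is correct, but the group is never produced.

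In the unbounded-degree case the proposed mechanism rests on a false premise. The closures $\overline{\Gamma_{f_k}}\subset \bar X\times\bar X$ of graphs of automorphisms with $\deg f_k\to\infty$ have Chow-theoretic degrees tending to infinity with respect to any fixed polarization, so they lie in infinitely many distinct components of the Chow variety and do not accumulate anywhere: finite-typeness of each component gives compactness only within a fixed degree, i.e., exactly in the bounded case, which you already handle by other means. No limit correspondence is therefore produced, and the step you yourself flag as ``the main obstacle'' --- showing that a degenerate limit yields rational curves through general points of $X$ rather than cycles supported over the boundary $\bar X\setminus X$ --- is never even reached. The top-level strategy (degree dichotomy; one-parameter subgroups giving rational curves) is a reasonable sketch of how such theorems are proved, but both central steps are missing, and this result should be treated as a cited black box rather than something recoverable by these means.
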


Now we can prove our main result:

\begin{theo}\label{th3}
Let $\Bbb K$ be an algebraically closed field of characteristic
zero. Let $f:\Bbb K^n\to \Bbb K^n$ be a non-trivial special
polynomial automorphism. Let $H$ be a hypersurface that is
contained in the set ${\rm Fix}(f)$ of fixed points of $f.$ Then
$H$ is uniruled, i.e., it is covered by rational curves.
\end{theo}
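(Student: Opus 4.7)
The plan is to apply Theorem \ref{mathann} to the generic level hypersurfaces of $h$, and then specialize the resulting uniruledness to $H$ itself. First I would reduce to the case where $H$ is irreducible, as uniruledness of the union reduces to uniruledness of each irreducible component. Let $h$ be a reduced defining equation for $H$. Lemma \ref{jac} with $\lambda={\rm Jac}(f)=1$ yields $h\circ f=h$, so $f$ preserves every level hypersurface $H_t:=\{h=t\}\subset \Bbb K^n$ and restricts to an automorphism $f_t:=f|_{H_t}$ of each.

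Next I would show that $f_t$ has infinite order for a dense set of $t\in \Bbb K$. By Proposition \ref{special}, $f$ itself has infinite order, because a non-trivial special automorphism whose fixed-point set has dimension $n-1$ cannot be of finite order greater than $1$. For each $N\geq 1$ the set $E_N:=\{t\in \Bbb K : f^N|_{H_t}={\rm id}\}$ is Zariski-closed in $\Bbb K$, and it is a proper subset (otherwise $f^N$ would be the identity on $\bigcup_t H_t=\Bbb K^n$, contradicting the infinite order of $f$). By the Lefschetz principle we may assume $\Bbb K=\C$; then $\bigcup_N E_N$ is a countable union of finite sets, so for every $t$ in its cocountable complement ${\rm Aut}(H_t)$ is infinite. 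By Theorem \ref{mathann}, $H_t$ is uniruled for a dense set of $t\in \Bbb K$.

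Finally I would descend uniruledness to $H=H_0$. Compactifying $\Bbb K^n\hookrightarrow \Bbb P^n$ and resolving the rational map induced by $h$, one obtains a proper flat family $\pi\colon X\to \Bbb P^1$ whose generic fibers are the projective closures $\overline{H_t}$. Uniruledness of $H_t$ gives uniruledness of $\overline{H_t}$, and since uniruledness is deformation-invariant in proper flat families in characteristic zero, every fiber of $\pi$ is uniruled, including the one containing $\overline{H_0}$. Uniruledness then passes to the dense open subset $H=H_0$, since a general rational curve in $\overline{H_0}$ meets $H_0$ in an open subset of $\Bbb A^1$.

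The main obstacle should be this last specialization step: arranging the compactification and the resolution of $h$ so that they interact cleanly with the chosen irreducible component of the special fiber, and then descending the uniruling family of rational curves from the projective closure to the quasi-affine part $H$. An alternative route, avoiding compactification entirely, would be to construct a non-trivial $\G_a$-action on $H$ directly from the decomposition $f={\rm id}+h^k g$ (with $k$ maximal so that $g$ does not vanish identically on $H$); the identity $h\circ f=h$ forces $g$ to be tangent to $H$ and hence to descend to a non-zero derivation of $\Bbb K[H]$, but establishing that this derivation is locally nilpotent appears to be a comparable difficulty.
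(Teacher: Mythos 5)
Your opening moves coincide with the paper's: $h\circ f=h$ from Lemma \ref{jac} with $\lambda=1$, infinite order of $f$ from Proposition \ref{special}, and the observation that $f$ acts on every level set $H_t$. Your argument that $f|_{H_t}$ has infinite order for all $t$ outside a countable set is also fine (each $E_N$ is finite when $f^N\not={\rm id}$, since $h-t$ can divide a fixed nonzero polynomial $f^N_i-x_i$ for only finitely many $t$), so Theorem \ref{mathann} applied fiberwise gives uniruledness of $H_t$ for very general $t$. The gap is exactly where you locate it: the descent to $t=0$. ``Uniruledness is deformation-invariant in proper flat families'' is not a citable theorem in that form; what is true in characteristic zero is that uniruledness is \emph{closed under specialization} for proper families (via a degree bound on the uniruling and properness of the Chow variety or of the space of stable maps), and even that only yields that the whole fiber $\pi^{-1}(0)$ of your compactified family is covered by rational curves. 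That fiber is in general reducible and non-reduced: besides $\overline{H}$ it contains components at infinity and exceptional divisors from resolving the indeterminacy of the rational map $\Bbb P^n\to \Bbb P^1$ induced by $h$, and a priori the limit curves through a point of $\overline{H}$ could sit entirely inside those other components. This can be repaired (an irreducible limit curve through a general point of $\overline{H}$, chosen off the other components, must lie in $\overline{H}$), but none of that is in your proposal, and it is the entire content of the step the theorem needs. The closing $\G_a$-action suggestion is, as you concede, undeveloped.

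The paper's proof is designed precisely to avoid this specialization. Instead of applying Theorem \ref{mathann} to each fiber, it base-changes the family $h:\C^n\to\C$ along a finite morphism $\pi:\Gamma\to\C$ from a \emph{non-rational} affine curve, obtaining an irreducible affine $n$-fold $X=\C^n\times_\C\Gamma$ carrying the infinite-order automorphism $F=(f\times{\rm id})|_X$. Theorem \ref{mathann} is invoked once, for the total space $X$; since a rational curve cannot dominate the non-rational curve $\Gamma$, every rational curve in $X$ is forced into a fiber of $\Pi:X\to\Gamma$, so \emph{all} fibers -- including the one equal to $H$ -- are uniruled, with no limiting argument and no compactification. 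That choice of a non-rational base is the one idea your proposal is missing; with it, the problematic descent becomes a one-line observation.
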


\begin{proof}
By Proposition \ref{special}, automorphism $f$ has an infinite
order. Let $h$ be an irreducible equation of $H.$ By Lemma
\ref{jac} we have $h\circ f=h.$ In particular, $f$ preserves all
fibers $h=t.$ Let $\Gamma$ be a non-rational affine curve and let
$\pi: \Gamma \to \C$ be a finite morphism. Denote by
$$X:=\C^n\times_\C \Gamma,$$
the fiber product determined by the mappings $h$ and $\pi.$ On $X$
acts the automorphism $F=(f\times {\rm identity})_{|_X}.$ We have
the projection $\Pi: X\to \Gamma$ with $(n-1)$-dimensional fibers
$\Pi^{-1}(\gamma)= h^{-1}(\pi(\gamma)).$ Because the polynomial
$h$ is irreducible,  generic fibers of $\Pi$ are irreducible.
Moreover, since the variety $X$ is a hypersurface in $\C^n\times
\Gamma$, it has to be   an irreducible affine variety of dimension
$n$. It is easy to see that the automorphism $F$ has an infinite
order. By virtue of Theorem \ref{mathann}, the variety $X$ is
uniruled. Since the curve $\Gamma$ is not uniruled, all fibers of
the mapping $\Pi: X\to \Gamma$ are uniruled. The hypersurface $H$
is one of these fibers.
\end{proof}

\begin{ex}
{\rm  Let $$N: \C^3\ni (x,y,z) \mapsto (x - 2y (xz + y^2) - z (xz
+ y^2)^2, y + z(xz + y^2), z)\in \C^3,$$ be the famous Nagata
automorphism.   The set of fixed points of $N$ is the cone
$\Lambda=\{ (x,y,z)\in\C^3 : xz + y^2=0\}.$ Since $\Lambda$ is a
singular variety, we see  that the automorphism $N$  is special.
Of course $\Lambda$ is a uniruled hypersurface.}$\square$
\end{ex}

\begin{ex}
{\rm We cannot expect that a lower dimensional components of the
set ${\rm Fix}(f)$ are uniruled. Indeed, let $\Gamma=\{ (x,y)\in
\C^2 : h(x,y)=0\}$ be an arbitrary plane curve. Then there is a
polynomial automorphism $f:\C^3\to \C^3$ such that ${\rm
Fix}(f)\cong\Gamma.$ Indeed, take $$f(x,y,z)=(x, y+z+h(x,y),
z+h(x,y)).$$ If we take $g(x,y,z,w)=(f(x,y,z), 2w)$, we obtain a
non-special automorphism $g:\C^4\to\C^4$ with ${\rm Fix}(g)\cong
\Gamma.$}
\end{ex}

At the end of this paper we state a Conjecture, which is more or
less the Masuda-Miyanishi Conjecture (see \cite{miya}):

\vspace{3mm}

{\bf Conjecture.} {\it Let $F:\C^n\to\C^n$ be a non-special
automorphism. Assume that $H\subset {\rm Fix}(f)$ is a
hypersurface. Then $H$ is isomorphic to $\C^{n-1}.$}

\vspace{15mm}


\begin{thebibliography}{99}

\bibitem[Car]{car} Cartan, H,  Sur le groupes de transformations analytiques, {\em Actualities Scientifiques et Industrielles, 198}, Paris 1935.


\bibitem[Har]{har1} Hartshorne, R, {\em Algebraic
Geometry}, Springer Verlag, New York, 1987.

\bibitem[Iit1]{ii1} Iitaka, S, {\em
Algebraic Geometry}, Springer Verlag, New York, 1982.



\bibitem[Jel1]{jel1} Jelonek, Z,  The automorphisms group of Zariski open affine subset of the plane, {\em Ann. Polon.
Math.} 62, 163-171, (1994)

\bibitem[Jel2]{jel2} Jelonek, Z,
Affine smooth varieties with finite group of automorphisms, {\em
Math. Z. } { 216 }, 575-591 (1994).

\bibitem[Jel3]{jel3} Jelonek, Z, On the group of automorphisms of a quasi-affine
variety-improved version, Research Gate, Zbigniew Jelonek, 17
July, 2014.

\bibitem[J-K]{j-k} Z. Jelonek, K. Kurdyka, On asymptotic critical values of a complex polynomial, {\em J.
Reine Angew. Math.} 565, (2003), 1-11.





\bibitem[M-M]{miya} Masuda, K; Miyanishi, M, Invariant sub-varieties of low
codimension in the affine spaces, {\em Tohoku Math. J.}, 52(2000),
61-77



\end{thebibliography}
\end{document}